\theoremstyle{plain}
\newtheorem{theorem}{Theorem}[section]
\newtheorem{lemma}[theorem]{Lemma}                           
\newtheorem{proposition}[theorem]{Proposition}
\newtheorem{corollary}[theorem]{Corollary}
\newtheorem*{remark*}{Remark}
\newtheorem*{remarks*}{Remarks}
\newtheorem{remark}[theorem]{Remark}
\newtheorem*{example*}{Example}
\newtheorem*{examples*}{Examples}
\newtheorem*{definition*}{Definition}
\newtheorem*{claim*}{Claim}
\numberwithin{figure}{section}
\numberwithin{equation}{section}
\newcommand{\proofend}{\hspace*{\fill} $\square$\\}
\def\1{\:\!}
\def\2{\;\!}
\def\Diffc0{\operatorname{Diff^c_0}}
\def\Sympc0{\operatorname{Symp^c_0}}
\def\CC{\mathbb{C}}
\def\RR{\mathbb{R}}
\def\ZZ{\mathbb{Z}}
\def\R{\operatorname{\mathbb{R}}}
\begin{document}

\title{\vspace*{0cm} On the agreement of symplectic capacities in high dimension}
\author{Dan Cristofaro-Gardiner and Richard Hind}

\date{\today}

\maketitle

\begin{abstract}

%Symplectic capacities are measurements of symplectic size.  
A theorem of Gutt-Hutchings-Ramos asserts that all normalized symplectic capacities give the same value for monotone four-dimensional toric domains.
% all normalized symplectic capacities give the same value.
% when the toric domain is monotone.  
We generalize this theorem to arbitrary dimension.  The new ingredient in our proof is the construction of symplectic embeddings of  
%To do this, we prove that 
 ``$L$-shaped" domains in any dimension into corresponding infinite cylinders; this resolves a conjecture of Gutt-Pereira-Ramos in the affirmative.

\end{abstract}

\section{Introduction}

Symplectic capacities are measurements of symplectic size: a {\em symplectic capacity} is a rule that associates to each symplectic manifold $(M,\omega)$ of a fixed dimension a nonnegative, possibly infinite, real number $c(M,\omega)$.  The assignation $(M,\omega) \to c(M,\omega)$ is subject to various axioms; perhaps the most important is the ``Monotonicity Axiom", which states that 
\[ c(M_1,\omega_1) \le c(M_2,\omega_2),\]
when $(M_1,\omega_1)$ can be symplectically embedded into $(M_2,\omega_2)$; for the other axioms, we refer the reader to \cite{chls}.  Any scalar multiple of a symplectic capacity is itself a symplectic capacity.  However, a symplectic capacity $c$ is said to be {\em normalized}  if
\[ c(B^{2n}(1)) = c(Z^{2n}(1)) = 1,\]
where $B^{2n}$ denotes the ball and $Z^{2n}$ denotes the infinite cylinder, both defined below.

While there are many symplectic capacities (see e.g. \cite{chls}), it is interesting to try to understand to what degree normalized symplectic capacities are unique.  For example, 
a conjecture due to Viterbo states that all normalized capacities give the same value on convex domains in $\mathbb{R}^{2n}$.  This is known to imply the famous Mahler conjecture \cite{vm}.   Another instance of this kind of phenomenon, which is the genesis of the current note, is the following theorem of Gutt-Hutchings-Ramos.  To set the notation,    let $\pi: \CC^n \to \R^n_{\ge 0}$ be given by $\pi(z_1, \dots , z_n) = (\pi|z_1|^2, \dots , \pi|z_n|^2)$. Then given a subset $\Omega \subset \R^n_{\ge 0}$ we can define the associated {\em toric domain} by
$$X_{\Omega} := \pi^{-1}(\Omega) \subset \CC^n.$$
The vector space $\CC^n$ has a standard symplectic structure $\omega = \frac{1}{2i} \sum dz_k \wedge d\overline{z_k}$ and so the $X_{\Omega}$ inherit a symplectic structure.
A toric domain $X_{\Omega}$ is called {\em monotone} if the outward normals at every point $\mu \in \partial \Omega \cap \R^n_{> 0}$ have nonnegative entries.  Many monotone toric domains are not convex, and many convex domains are not toric.  

%However, the following     

\begin{theorem}[\cite{ghr}]
Let $X_{\Omega}$ be a four-dimensional monotone toric domain.  Then any two normalized symplectic capacities $c_1$ and $c_2$ satisfy 
\[ c_1(X_{\Omega}) = c_2(X_{\Omega}).\]
\end{theorem}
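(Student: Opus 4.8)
The plan is to bound an arbitrary normalized capacity between the two extreme ones and then show the extremes agree. Define the Gromov width and the cylindrical capacity by
\[ c_B(X) = \sup\{a : B^4(a) \hookrightarrow X\}, \qquad c_Z(X) = \inf\{a : X \hookrightarrow Z^4(a)\}. \]
Both are themselves normalized symplectic capacities, and the Monotonicity and normalization axioms give, for every normalized capacity $c$ and every $X$,
\[ c_B(X) \le c(X) \le c_Z(X), \]
since $B^4(a) \hookrightarrow X$ forces $c(X) \ge c(B^4(a)) = a$, while $X \hookrightarrow Z^4(a)$ forces $c(X) \le c(Z^4(a)) = a$. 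Because $c_B \le c_Z$ always holds, it suffices to prove the single equality $c_B(X_{\Omega}) = c_Z(X_{\Omega})$ for monotone $\Omega$; the theorem then follows with $c_1(X_\Omega)=c_2(X_\Omega)$ equal to this common value. Concretely, I would produce one number $a$ together with symplectic embeddings $B^4(a) \hookrightarrow X_{\Omega}$ and $X_{\Omega} \hookrightarrow Z^4(a)$, which sandwiches $c_B(X_\Omega) \ge a \ge c_Z(X_\Omega)$ and pins both invariants to $a$.

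First I would reduce to a combinatorial model. Writing $\Omega$ as a limit of monotone staircase (``$L$-shaped'') regions, i.e. finite unions of rectangles whose boundary in $\R^2_{>0}$ is nonincreasing, and using that $c_B$ and $c_Z$ vary continuously under such inner and outer exhaustions (a consequence of Monotonicity, which squeezes the two-sided approximations together), it is enough to treat $L$-shaped $\Omega$. The monotone hypothesis is exactly the statement that the boundary staircase descends from the $x_2$-axis to the $x_1$-axis, and this is what makes the rearrangement below possible.

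The main step, and the step I expect to be the genuine obstacle, is the upper bound: constructing a symplectic embedding $X_{\Omega} \hookrightarrow Z^4(a)$ into a cylinder whose radius matches the Gromov width. Set-theoretic inclusion only yields $X_{\Omega} \hookrightarrow Z^4(\max_{\Omega} x_1)$, which is far from sharp once $\Omega$ is thin, so honest symplectic folding is required. The idea is to slice the staircase into vertical strips and, using that the strips get shorter as $x_1$ grows (monotonicity), symplectically re-stack the right-hand strips upward into the free, unbounded $x_2$-direction of the cylinder while compressing their $x_1$-extent, so that the whole image lies in $\{x_1 \le a\}$. This is precisely the embedding of $L$-shaped domains into cylinders highlighted in the introduction; carrying it out, and verifying that the optimal width one can achieve is the value $a$, is the crux of the argument.

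Finally I would match this $a$ from below by a ball embedding $B^4(a) \hookrightarrow X_{\Omega}$, built as an explicit toric ball-packing embedding adapted to the same strip decomposition, so that the ball filling and the cylinder squeezing are tuned to the identical value $a$. Combining the two embeddings gives $c_B(X_{\Omega}) = a = c_Z(X_{\Omega})$, hence $c_1(X_{\Omega}) = c_2(X_{\Omega})$ for any two normalized capacities. The hard part is the cylinder embedding: the ball lower bound and the reduction to $L$-shapes are comparatively soft, whereas squeezing the full monotone domain into a cylinder of the critical radius is where the monotone geometry must be used quantitatively.
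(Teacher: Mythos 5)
Your overall skeleton --- sandwiching every normalized capacity between the Gromov width $c_B$ and the cylindrical capacity $c_Z$, and then proving $c_B(X_\Omega)=c_Z(X_\Omega)$ by exhibiting a single value $a$ with $B^4(a)\hookrightarrow X_\Omega$ and $X_\Omega\hookrightarrow Z^4(a)$ --- is exactly the paper's reduction. But the part you defer as ``the crux'' is where the actual content lives, and your sketch of it both misidentifies the key geometric step and proposes a construction that is not known to give the sharp constant. The paper's argument is: let $r$ be the capacity of the largest simplex $\{x_1+x_2<r\}\subset\Omega$ (so $B(r)$ embeds by \emph{inclusion} --- no ball-packing construction is needed for the lower bound), and pick a point $(a_1,a_2)\in\partial\Omega$ with $a_1+a_2=r$. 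The monotonicity hypothesis is then used exactly once, to conclude the containment $\Omega\subset\{x_1<a_1\}\cup\{x_2<a_2\}$, i.e. $X_\Omega\subset L(a_1,a_2)$ where $L$ is the (unbounded) union of two half-space toric domains. The whole theorem then rides on the single embedding $L(a_1,a_2)\hookrightarrow Z(a_1+a_2)$. You never identify $a$ as the inscribed-simplex capacity, and you never make the link $a=a_1+a_2$ between the tangency point and the cylinder radius; without that, your two bounds have no reason to meet at the same number.

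Separately, your reading of ``$L$-shaped'' as a staircase (finite union of rectangles) and your plan to approximate $\Omega$ by staircases and symplectically fold vertical strips is a genuinely different route, and it is the weak point: symplectic folding of strips generically loses a definite fraction of the width and there is no indication that it achieves the critical radius $a_1+a_2$. In four dimensions the embedding $L(a_1,a_2)\hookrightarrow Z(a_1+a_2)$ is proved in \cite{ghr} by invoking the concave-into-convex embedding theorem of \cite{cg} (pseudoholomorphic curves and inflation), and in this paper by an explicit construction via a family of Hamiltonian diffeomorphisms of $\CC P^{1}(S)$ moving a disk into an annulus on each reduced level $\partial B(S)$; neither is a folding argument. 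So the gap is twofold: the quantitative use of monotonicity (inscribed simplex $\Rightarrow$ containment in $L(a_1,a_2)$ with $a_1+a_2$ equal to the simplex capacity) is missing, and the proposed substitute for the hard embedding is not a construction that is known to work at the sharp value.
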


%Many monotone toric domains are not convex, and many convex domains are not toric.    

Our main result extends this theorem to arbitrary dimension.

\begin{theorem}\label{agree}
All normalized symplectic capacities agree on monotone toric domains of any dimension.
%Let $X_{\Omega}$ be a monotone toric domain.  Then any two normalized symplectic capacities $c_1$ and $c_2$ satisfy 
%\[ c_1(X_{\Omega}) = c_2(X_{\Omega}).\]
\end{theorem}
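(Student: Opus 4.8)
The plan is to trap each monotone toric domain between a ball and a cylinder of equal capacity. For $\Omega \subset \R^n_{\ge 0}$ monotone and bounded, set
\[ A = A(\Omega) := \sup\{\, a \ge 0 : \Delta(a) \subseteq \Omega \,\}, \qquad \Delta(a) := \{\, x \in \R^n_{\ge 0} : x_1 + \dots + x_n \le a \,\}, \]
so that $B^{2n}(a) = X_{\Delta(a)}$. By the normalization axiom together with the scaling behaviour of capacities, every normalized capacity $c$ satisfies $c(B^{2n}(A)) = c(Z^{2n}(A)) = A$. It therefore suffices to produce symplectic embeddings $B^{2n}(A) \hookrightarrow X_\Omega \hookrightarrow Z^{2n}(A)$, for then Monotonicity forces $A \le c(X_\Omega) \le A$ for every normalized $c$, which is the theorem.

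The left-hand embedding is immediate: by definition of $A$ we have $\Delta(A) \subseteq \Omega$, and since $X_{\Omega_1} \subseteq X_{\Omega_2}$ whenever $\Omega_1 \subseteq \Omega_2$, the inclusion $B^{2n}(A) = X_{\Delta(A)} \subseteq X_\Omega$ is the desired embedding. All the substance lies in the right-hand embedding $X_\Omega \hookrightarrow Z^{2n}(A)$, which cannot in general be realized by inclusion: a monotone domain typically protrudes beyond the slab $\{x_1 \le A\}$ (already the planar ``L'', $\Omega = ([0,1]\times[0,3]) \cup ([0,3]\times[0,1])$, has $A = 2$ yet reaches $x_1 = 3$).

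To build this embedding I would first reduce to a combinatorial model. Fix $\epsilon > 0$. Because $\Delta(A')\not\subseteq\Omega$ for $A'>A$, the diagonal face of $\Delta(A)$ meets $\partial\Omega$, and I would enclose $\Omega$ in a ``staircase'' domain $\Omega' \supseteq \Omega$ — a finite union of coordinate boxes — chosen to follow $\partial\Omega$ closely enough that $A(\Omega') \le A + \epsilon$; monotonicity of $\Omega$ together with continuity of $A(\,\cdot\,)$ under outer approximation guarantees such a staircase exists. It then suffices to embed $X_{\Omega'} \hookrightarrow Z^{2n}(A+\epsilon)$ and let $\epsilon \to 0$. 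These $L$-shaped domains are exactly the objects of the Gutt--Pereira--Ramos conjecture. Since the target cylinder constrains only $x_1$ and is free in the remaining $\CC^{n-1}$ factor, the only obstruction to inclusion is the part of $\Omega'$ with $x_1 > A+\epsilon$; there the complementary coordinates are small, so this ``finger'' can be folded back into $\{x_1 \le A+\epsilon\}$ using the room available in the free directions, in the spirit of symplectic folding.

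The main obstacle is precisely this folding construction when $2n > 4$. In dimension four the finger is a thin planar region and classical one-step folding applies directly; in higher dimension the region to be folded is genuinely higher-dimensional, its cross-sections vary with the remaining coordinates, and the excess in $x_1$ must be absorbed into several free directions of the $\CC^{n-1}$ factor at once, so the fold is a single higher-dimensional symplectomorphism rather than a product of a planar fold with the identity. I would construct it explicitly and torus-equivariantly, treating the steps of the staircase one at a time and verifying at each stage that the newly folded piece stays inside $Z^{2n}(A+\epsilon)$ and remains disjoint from the pieces embedded before it; this disjointness-and-containment bookkeeping in the coupled high-dimensional geometry is where essentially all the work lies.
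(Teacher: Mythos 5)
Your outer framing---trapping $X_\Omega$ between a ball and a cylinder of equal capacity and invoking normalization plus Monotonicity---is exactly the paper's reduction, and the inner inclusion $B^{2n}(A)\subseteq X_\Omega$ is fine. The gap is the embedding $X_\Omega\hookrightarrow Z^{2n}(A+\epsilon)$: this is the entire content of the paper's new result (Theorem~\ref{main}), and your proposal does not construct it but defers it to a fold you ``would construct explicitly,'' while conceding that all the work lies there. Worse, the geometric picture guiding that sketch is wrong once $2n>4$. If $(a_1,\dots,a_n)\in\partial\Omega$ is a touching point of the largest inscribed simplex, with $A=\sum_k a_k$, monotonicity gives only that every point of $\Omega$ has \emph{some} coordinate $x_k<a_k$, i.e.\ $X_\Omega\subset L(a_1,\dots,a_n)$. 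Hence the ``finger'' $\{x_1>A+\epsilon\}\cap\Omega$ need not be thin in the complementary directions: a point there has $x_k<a_k$ for at least one $k\ge 2$, but the remaining coordinates can be arbitrarily large. For instance the monotone domain $\bigl(\{x_2<1\}\cap[0,10]^3\bigr)\cup\{x_1+x_2+x_3\le 3\}$ in $\R^3_{\ge 0}$ has $A=3$ yet contains $(9.9,\tfrac12,9.9)$. So the excess region is itself an $L$-shaped region in the remaining $n-1$ variables, \emph{which} coordinate is small varies from point to point, and a single fold ``into the free directions'' in the spirit of planar symplectic folding does not apply; this coupling is precisely why the four-dimensional argument of Gutt--Hutchings--Ramos did not generalize immediately.

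The paper's route around this is different in both respects where your sketch is soft. First, no staircase approximation is needed: monotonicity directly yields the clean containment $X_\Omega\subset L(a_1,\dots,a_n)$, reducing everything to the single statement $L(a_1,\dots,a_n)\hookrightarrow Z(r)$ for $r>\sum_k a_k$. Second, that embedding is built not by folding but by $S^1$-symplectic reduction: for each $S$ one writes down a Hamiltonian diffeomorphism $\phi_S$ of $\CC P^{n-1}(S)$ --- a product of planar maps, the $i$-th pushing the disk of area $S-r$ into the annulus $\{a_i<\pi|z|^2<S-\sum_{j\ne i}a_j\}$ --- which carries the reduction of $Z(r)^c\cap\partial B(S)$ into that of $L(a_1,\dots,a_n)^c\cap\partial B(S)$, and then lifts the whole smooth family $\{\phi_S\}$ to a single Hamiltonian diffeomorphism of $\CC^n$. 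If you want to repair your argument, replace the staircase-and-fold step by such a construction, or at minimum reduce to the single $L$-shaped domain and prove that case; as written the key step is asserted rather than proved, and the heuristic behind it fails for $n\ge 3$.
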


A strictly monotone toric domain is one for which the outward normals at every point   $\mu \in \partial \Omega \cap \R^n_{\ge 0}$ have positive entries. It was shown in \cite{ghr}, Proposition 1.8, that strictly monotone toric domains are dynamically convex. The notion of dynamical convexity is invariant under symplectomorphisms, and all strictly convex domains are dynamically convex, see \cite{convex}, section 3. Abbondandolo, Bramham, Hryniewicz, and Salam\~{a}o \cite{abhs} have constructed 4 dimensional dynamically convex domains for which Viterbo's conjecture does not hold, see \cite{ghr} Remark 1.9, but we do not know if similar examples exist in higher dimension.
 
Let us now explain the key new ingredient in our proof, which is a theorem of potentially independent interest.
%To prove this, we prove another result of potentially independent interest.
%Our main result is a symplectic embedding from L-shaped domains into cylinders.
%To fix notation,
Given $r>0$ we define the ball and cylinder of capacity $r$ by
$$B(r) = X_{b(r)}, \qquad Z(r) = X_{z(r)}$$
respectively, where $b(r) = \{ x_k \ge 0 \, | \, \sum x_k < r\}$ and $z(r) = \{x_n < r\}$.
Also, given $a_1, \dots , a_n >0$, we define an {\em $L$-shaped domain} by
$$L(a_1, \dots , a_n) = X_{l(a_1, \dots , a_n)}$$
where $l(a_1, \dots , a_n) = \bigcup_{k=1}^n \{ x_k < a_k \}$.
A crucial step in the argument of Gutt-Hutchings-Ramos is to show that $L(a_1,a_2)$ can be symplectically embedded into $Z(a_1+ a_2)$.  This is proved by making use of \cite{cg}, a rather general embedding result proved using pseudoholomorphic curves and symplectic inflation.  As the techniques in \cite{cg} have no known analogue in higher dimensions, it is natural to wonder whether or not the corresponding embeddings of $L$-shaped domains in higher dimensions exist; our second result resolves this. 

%A slightly simplified version of this was conjectured in all dimensions by Gutt, Periera and Ramos in \cite{gpr}

%is natural to wonder whether or not   It is natural to ask

%interesting to see if one can extend this embedding to higher-dimensions

%As a result, the authors note that they do not know how to extend their result to higher dimensions.   We construct the needed symplectic embedding in arbitrary dimension through a much more ``hands-on" approach

%We will construct a symplectic embedding as follows.

\begin{theorem}\label{main} Let $r > \sum a_k$. Then there exists a symplectic embedding
$$L(a_1, \dots , a_n) \hookrightarrow Z(r).$$
\end{theorem}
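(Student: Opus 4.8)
The plan is to construct the embedding $L(a_1,\dots,a_n) \hookrightarrow Z(r)$ explicitly, reducing the $n$-dimensional problem to the known two-dimensional case via an inductive ``splitting'' of variables. Recall that $L(a_1,\dots,a_n) = X_{l}$ where $l = \bigcup_{k=1}^n \{x_k < a_k\}$ is the complement (inside the positive orthant) of the corner box $\prod_k [a_k,\infty)$, while $Z(r) = X_{z(r)} = \{x_n < r\}$ is a cylinder in the last coordinate. The essential feature to exploit is that a point of $X_l$ lies in at least one of the ``slabs'' $\{x_k < a_k\}$, so the domain is \emph{thin} in at least one direction at every point. The strategy is to design a symplectomorphism that takes each slab and ``stacks'' its $x_k$-contribution into the single target coordinate $x_n$, in such a way that the total $x_n$-extent never reaches $r$ since $r > \sum_k a_k$.

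\smallskip

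Concretely, I would proceed by induction on $n$. For the base case $n=2$ the statement $L(a_1,a_2) \hookrightarrow Z(a_1+a_2)$ is exactly the Gutt--Hutchings--Ramos input recalled in the excerpt, and any $r > a_1+a_2$ follows by the evident inclusion $Z(a_1+a_2) \subset Z(r)$ combined with Monotonicity. For the inductive step, the plan is to separate off the first coordinate. Decompose $l(a_1,\dots,a_n) = \{x_1 < a_1\} \cup \bigl(\{x_1 \ge a_1\} \cap l'\bigr)$ where $l' = \bigcup_{k=2}^n \{x_k < a_k\}$ is the $L$-shape in the remaining $n-1$ coordinates. On the region $\{x_1 \ge a_1\}$ the point already satisfies an $L$-condition among coordinates $2,\dots,n$, so by the inductive hypothesis (applied in those variables, with a symplectomorphism supported away from the first factor) one can fold those coordinates into $x_n$ using budget $\sum_{k=2}^n a_k$; meanwhile the slab $\{x_1 < a_1\}$ contributes at most $a_1$ worth of $x_n$-extent after a two-dimensional fold of the $(x_1,x_n)$ pair. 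Arranging these two operations to have disjoint images in the target $x_n$-axis, the total never exceeds $\sum_k a_k < r$.

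\smallskip

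The honest difficulty is that symplectic embeddings are \emph{not} determined by what they do to the moment images: a toric domain $X_\Omega$ with $\Omega$ not of product type cannot simply be rearranged region-by-region on the base, because the symplectomorphism must be globally smooth and volume-compatible and must respect the $\mathbb{T}^n$-action boundaries where the slabs meet. Thus the real content is to produce, on overlap regions such as $\{x_1 = a_1\}$, an interpolating symplectomorphism that matches the two partial constructions. I expect the main obstacle to be exactly this gluing: one must build explicit Hamiltonian isotopies (in the spirit of the ``multiple symplectic folding'' and Traynor-type constructions) whose generating functions are smooth across the walls where one slab condition switches to another. The two-dimensional fold must be implemented not as a toric rearrangement but as an honest area-preserving map of an annular region into a disk of the same area, and its higher-dimensional suspension must be checked to remain a symplectomorphism onto its image inside $\{x_n < r\}$. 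Verifying that the composite map is injective and lands in the open cylinder — rather than merely preserving moment-map data — is where the careful analysis lies.
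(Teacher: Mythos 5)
There is a genuine gap, and it sits exactly where you locate it yourself: the gluing. Your plan decomposes $l(a_1,\dots,a_n)$ into the slab $\{x_1<a_1\}$ and the region $\{x_1\ge a_1\}\cap l'$, handles each piece by a separate construction (a coordinate fold on the first, the inductive hypothesis on the second), and then asks that the two partial maps be interpolated across the wall $\{x_1=a_1\}$ with disjoint images in the target $x_n$-direction. But a symplectic embedding of the union cannot in general be assembled from embeddings of the pieces: the two partial maps do not agree on the wall, the requirement that their images be disjoint in $x_n$ actively forces a discontinuity there, and points lying in several slabs simultaneously have no well-defined destination. No mechanism is offered for producing the interpolating Hamiltonian isotopy, and this is not a technical afterthought --- it is the entire content of the theorem, since each individual slab $\{x_k<a_k\}$ embeds into $Z(a_k)$ by a mere permutation of coordinates; the difficulty is only ever the union. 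As written, the proposal identifies the obstacle but does not overcome it, so the induction does not close.

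The paper's proof sidesteps this issue entirely by a different mechanism, which is worth internalizing because it converts the union into an intersection. Both $L(a_1,\dots,a_n)$ and $Z(r)$ are invariant under the diagonal $S^1$-action, so one reduces along the spheres $\partial B(S)$ to a family of toric domains $U_S,V_S\subset \CC P^{n-1}(S)$, and a smooth family of Hamiltonian diffeomorphisms $\phi_S$ with $\phi_S(U_S)\subset V_S$ lifts to a single Hamiltonian diffeomorphism of $\CC^n$ carrying $U$ into $V$ (Lemma \ref{lift} and Corollary \ref{embed}). Passing to complements, $\CC P^{n-1}(S)\setminus V_S$ is a closed ball $\overline{B(S-r)}$ and $\CC P^{n-1}(S)\setminus U_S=\{R_i\ge a_i\ \forall i,\ \sum R_i\le S-a_n\}$ is an \emph{intersection} of conditions, so the required map is simply a product $\Phi=\phi_1\times\cdots\times\phi_{n-1}$ of two-dimensional maps, each sending a disk into an annulus of larger area --- no gluing, no induction, and no appeal to the four-dimensional result of \cite{ghr} (whose proof via \cite{cg} is precisely the black box with no higher-dimensional analogue). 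If you want to salvage your approach, the complement trick is the missing idea.
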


%Thus, is was proven in the case $n=2$ by Gutt, Hutchings and Ramos \cite{ghr} and 
A slightly simplified version of this was conjectured in all dimensions by Gutt, Periera and Ramos in \cite[Conj. 15]{gpr}. 

\begin{remark} Combining the proof of Theorem \ref{main} with Theorem 4.3 in \cite{pvn}, it is actually possible to produce embeddings even in the case when $r = \sum a_k$. However the strict inequality is enough to derive our consequences for symplectic capacities.
\end{remark}

%The proof in \cite{ghr} used the methods of ECH however, which do not apply in higher dimension.

%The motivation in \cite{ghr} was to establish the equivalence of normailzed symplectic capacities for monotone toric domains, and Theorem \ref{main} implies that the same holds in higher dimension. 

Let us now recall why Theorem \ref{main} implies Theorem \ref{agree}.

%the argument. 
\begin{proof}  Given $X \subset \CC^n$ we define its Gromov width by $$c_G(X) = \sup \{ r>0 \, | \, B(r) \hookrightarrow X \}$$ and its cylindrical capacity by $$c_Z(X) = \inf \{ r>0 \, | \, X \hookrightarrow Z(r) \}.$$ The definition of normalized symplectic capacities implies that  $$c_G(X) \le c(X) \le c_Z(X)$$ for all such capacities $c$ and all $X \subset \CC^n$.

Thus, it suffices to show that  $c_G(X) = c_Z(X)$ for all monotone toric domains $X_{\Omega}$.

%It suffices to show that the Gromov width agrees with the cylindrical capacity.

Let $B(r)$ be the largest toric ball that embeds by inclusion.   Then there is a point $(a_1,\ldots,a_n) \in \partial \Omega$ with $a_1 + \ldots + a_n = r.$
It now suffices to show that $r$ is an upper bound on the cylindrical capacity. Because $\Omega$ is monotone, $X_{\Omega} \subset L(a_1,\ldots,a_n).$  By Theorem~\ref{main}, it follows that there is a symplectic embedding 
$X_{\Omega} \to Z(r)$
Thus, it follows that the cylindrical capacity is at most $r$, as desired.

%We have the following.

%\begin{corollary} If $X$ is a monotone toric domain then $c_G(X) = c_Z(X)$. Hence all normalized symplectic capacities coincide on monotone toric domains.
%\end{corollary}

%\begin{proof}

%[add this]

\proofend
\end{proof}

{\bf Acknowledgements.} We would like to thank Jean Gutt, Michael Hutchings and Vinicius Ramos for helpful discussions.
%, Felix Schlenk and Richard Schwartz for reading a preliminary version of the text and providing very helpful comments.   
DCG also thanks the National Science Foundation for their support under agreement DMS-2227372, and RH thanks the Simons Foundation for their support under grant no. 633715.  We also thank the Brin Mathematics Research Center at the University of Maryland for hosting a visit by RH, during which important conversations about this project occurred.

%A famous conjecture of Viterbo states that all normalized capacities coincide on convex domains. Many monotone toric domains are not convex, but also many convex domains are not toric.

\section{Construction of a symplectic embedding}

Our symplectic embedding is quite explicit, exploiting an idea from \cite[Sec. 2.1]{busehind11}.
In section \ref{two1}  we describe a general construction for embedding domains in $\CC^n$ which are invariant under the diagonal $S^1$ action. Specifically we reduce the construction to finding a 1 parameter family of embeddings in complex $(n-1)$ dimensional projective space. In section \ref{two2} we apply the construction to the domains $L(a_1, \dots , a_n)$ and $Z(r)$. This reduces Theorem \ref{main} to finding a family of Hamiltonian diffeomorphisms of $\CC P^{n-1}$, which we proceed to do.

\subsection{Symplectomorphisms of projective space and embeddings in $\CC^n$}\label{two1}

%$B(S) \subset \CC^n$ be the ball, and 
Let $p_S: \partial B(S) \to \CC P^{n-1}(S)$ be the symplectic reduction, that is, the quotient by the characteristic orbits. Hence $\CC P^{n-1}(S)$ is complex projective space and inherits a symplectic form which integrates to $S$ over complex lines.

Next let $H : \CC P^{n-1}(S) \to \RR$ and $\tilde{H}: \CC^n \to \RR$ be a smooth extension of $H \circ p_S : \partial B(S) \to \RR$. Denote by $\phi$ and $\tilde{\phi}$ the corresponding Hamiltonian diffeomorphisms of $\CC P^{n-1}(S)$ and $\CC^n$ respectively.

\begin{lemma} \label{lift}
If $z \in \partial B(S)$ we have $\tilde{\phi}(z) \in \partial B(S)$ and $p_S (\tilde{\phi}(z)) = \phi(p_S(z))$.
\end{lemma}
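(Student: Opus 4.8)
The plan is to realize $\partial B(S)$ as the regular level set $\{f = S\}$ of the Hamiltonian $f(z) = \pi\sum_k |z_k|^2$ that generates the diagonal circle action, and to exploit the fact that its characteristic foliation is exactly the orbit foliation of the Hamiltonian vector field $X_f$. Indeed, with the convention $\iota_{X_g}\omega = dg$, the field $X_f$ is tangent to $\{f=S\}$ and satisfies $\omega(X_f, v) = df(v) = 0$ for all $v \in T(\partial B(S))$, so it spans the characteristic line $\ker(\iota^*\omega)$, where $\iota : \partial B(S) \hookrightarrow \CC^n$ is the inclusion. Since $p_S$ is by definition the quotient by these characteristics, a function on $\partial B(S)$ has the form $H \circ p_S$ precisely when it is constant along $X_f$; as $\tilde H$ is an extension of $H\circ p_S$ we get $\tilde H|_{\partial B(S)} = H \circ p_S$ and hence $d\tilde H(X_f) = 0$ along $\partial B(S)$. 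This single identity will drive both assertions.

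First I would establish tangency, which yields the first claim. Using the two relations $\iota_{X_f}\omega = df$ and $\iota_{X_{\tilde H}}\omega = d\tilde H$, one computes $df(X_{\tilde H}) = \omega(X_f, X_{\tilde H}) = -\,d\tilde H(X_f)$, and the right-hand side vanishes on $\partial B(S)$ by the preceding paragraph. Thus $X_{\tilde H}$ is tangent to the level set $\{f = S\}$ at each of its points, so the flow of $\tilde H$ preserves $\partial B(S)$; in particular $\tilde\phi(z) \in \partial B(S)$ whenever $z \in \partial B(S)$.

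For the second claim I would invoke the defining property of the reduced form $\omega_{\mathrm{red}}$ on $\CC P^{n-1}(S)$, namely $p_S^*\omega_{\mathrm{red}} = \iota^*\omega$, together with $p_S^* H = \tilde H|_{\partial B(S)}$. The aim is to show that $X_{\tilde H}|_{\partial B(S)}$ is $p_S$-related to the Hamiltonian vector field $X_H$ of $H$ on the reduced space. Choosing any local lift $Y$ of $X_H$ through the submersion $p_S$, a short computation using these two identities gives $\iota_Y(\iota^*\omega) = d(p_S^* H)$, while tangency of $X_{\tilde H}$ gives $\iota_{X_{\tilde H}}(\iota^*\omega) = \iota^*(d\tilde H) = d(p_S^* H)$ as well. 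Hence $X_{\tilde H}|_{\partial B(S)} - Y$ lies in $\ker(\iota^*\omega)$, which is the characteristic line $\operatorname{span}(X_f) = \ker dp_S$. Applying $dp_S$ kills this difference, so $dp_S(X_{\tilde H}) = X_H \circ p_S$. Since $p_S$-related vector fields have intertwined flows, $p_S \circ \tilde\phi = \phi \circ p_S$ on $\partial B(S)$, which is the asserted identity at time $1$.

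The only genuinely delicate points are bookkeeping: identifying $\ker(\iota^*\omega)$ with the characteristic line $\operatorname{span}(X_f)$, and keeping the sign conventions for the Hamiltonian vector fields consistent throughout. Everything else is the standard mechanism for reduction of Hamiltonian dynamics, so the conceptual crux is simply that $\tilde H$ being an honest extension of $H \circ p_S$ forces $d\tilde H(X_f) = 0$ on the boundary — the fact from which both the invariance of $\partial B(S)$ and the intertwining of the flows flow.
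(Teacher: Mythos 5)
Your argument is correct and is essentially the paper's own proof, expanded: the paper likewise deduces tangency of $X_{\tilde H}$ to $\partial B(S)$ from $\tilde H$ being constant along characteristics, and then obtains $dp_S\,X_{\tilde H} = X_H\circ p_S$ and integrates. Your version merely fills in the standard symplectic-reduction bookkeeping ($\ker(\iota^*\omega)=\operatorname{span}(X_f)=\ker dp_S$) that the paper leaves implicit.
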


\begin{proof} As $\tilde{H}$ is constant along the characteristic orbits, the Hamiltonian vector field $X_{\tilde{H}}$ is tangent to $\partial B(S)$ and hence its flow preserves $\partial B(S)$, giving the first part of the statement. Also, if $z \in \partial B(S)$ we have $dp_S \, X_{\tilde{H}}(z) = X_H(p_S(z))$, and integrating gives the second part.
\proofend
\end{proof}

The main result of this section will be a parameterized version of Lemma \ref{lift}. Let $$p: \CC^n \setminus \{ 0 \} \to (0,\infty) \times \CC P^{n-1},$$ $$z \in \partial B(S) \mapsto (S, p_S(z)).$$
A family of functions $H_S : \CC P^{n-1} \to \R$ define a function $H: (0,\infty) \times \CC P^{n-1} \to \R$ by $(S, z) \mapsto H_S(z)$ which we always assume to be smooth. We will also assume there is a constant $c$ so that $H_S(z) = c$ whenever $S$ is small. Then $\tilde{H} = H \circ p$ extends smoothly to a function on $\CC^n$ with $\tilde{H}(0)=c$. It is invariant under the diagonal $S^1$ action generated by multiplication by the unit circle, that is, $\tilde{H}(e^{it}z) = \tilde{H}(z)$.

%Let $H_S$ be a smooth family of functions on $\CC P^{n-1}$ generating Hamiltonian flows on the symplectic manifolds $\CC P^{n-1}(S)$. Together these lift to a function $H$ on $\CC^n$ which is invariant under the $S^1$ action $z \mapsto e^{it} z$. 

As in the Lemma \ref{lift}, let $\phi_S$ be the Hamiltonian diffeomorphism of $\CC P^{n-1}(S)$ generated by $H_S$ and $\tilde{\phi}$ be the  Hamiltonian diffeomorphism of  $\CC^n$ generated by $\tilde{H}$.

Let $U, V \subset \CC^n$ be open sets with $V$ invariant under the $S^1$ action and with $0 \in V$, and let $U_S, V_S \subset \CC P^{n-1}(S)$ be the images of $U \cap \partial B(S)$ and $V \cap \partial B(S)$ under the projection maps $p_S : \partial B(S) \to \CC P^{n-1}(S)$. Then Lemma \ref{lift} implies the following.

\begin{corollary}\label{embed} Suppose $\phi_S(U_S) \subset V_S$ for all $S$. Then $\tilde{\phi}(U) \subset V$.
\end{corollary}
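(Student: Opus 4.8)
The plan is to prove Corollary \ref{embed} by directly combining the parameterized lifting property with the $S^1$-invariance built into the construction, reducing everything to the fiberwise statement on each $\CC P^{n-1}(S)$. The key observation is that $\CC^n \setminus \{0\}$ is foliated by the sphere boundaries $\partial B(S)$ for $S > 0$, and both the Hamiltonian flow of $\tilde{H}$ and the sets $U$, $V$ respect this foliation in a controlled way. First I would verify that $\tilde{\phi}$ preserves each $\partial B(S)$. This follows exactly as in Lemma \ref{lift}: since $\tilde{H} = H \circ p$ is constant along characteristic orbits on each $\partial B(S)$, the Hamiltonian vector field $X_{\tilde{H}}$ is tangent to $\partial B(S)$, so its time-one flow $\tilde{\phi}$ maps $\partial B(S)$ to itself. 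The parameter $S$ is thus preserved by $\tilde{\phi}$, which is what lets us argue level-set by level-set.

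Next I would fix an arbitrary point $z \in U$ with $z \neq 0$, set $S = \pi|z_1|^2 + \cdots + \pi|z_n|^2$ so that $z \in \partial B(S)$, and aim to show $\tilde{\phi}(z) \in V$. By Lemma \ref{lift} applied on $\partial B(S)$, we have $p_S(\tilde{\phi}(z)) = \phi_S(p_S(z))$. Since $z \in U \cap \partial B(S)$, its projection $p_S(z)$ lies in $U_S$ by definition; the hypothesis $\phi_S(U_S) \subset V_S$ then gives $p_S(\tilde{\phi}(z)) = \phi_S(p_S(z)) \in V_S$. Because $\tilde{\phi}$ preserves $\partial B(S)$, the point $\tilde{\phi}(z)$ lies on $\partial B(S)$ and projects into $V_S$. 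The remaining task is to upgrade this ``projects into $V_S$'' statement back to ``lies in $V$,'' and this is where the $S^1$-invariance of $V$ is essential.

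The subtle point, and the step I expect to require the most care, is the translation between $V_S \subset \CC P^{n-1}(S)$ and $V \subset \CC^n$. By definition $V_S = p_S(V \cap \partial B(S))$, so knowing $p_S(\tilde{\phi}(z)) \in V_S$ only tells us that $\tilde{\phi}(z)$ lies in the same characteristic orbit as \emph{some} point $w \in V \cap \partial B(S)$. A priori $\tilde{\phi}(z)$ itself need not equal $w$. However, the characteristic orbits of $\partial B(S)$ are precisely the orbits of the diagonal $S^1$ action $w \mapsto e^{it} w$, so $\tilde{\phi}(z) = e^{it_0} w$ for some $t_0$. Since $V$ is assumed $S^1$-invariant and $w \in V$, we conclude $\tilde{\phi}(z) = e^{it_0} w \in V$, as desired. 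Finally I would handle the point $z = 0$ separately if $0 \in U$: since $\tilde{H}$ equals the constant $c$ near $0$, the flow fixes the origin, so $\tilde{\phi}(0) = 0 \in V$ by hypothesis. This disposes of all cases and completes the proof.
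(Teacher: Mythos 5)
Your proposal is correct and follows essentially the same route as the paper: reduce to a fixed level $\partial B(S)$, apply Lemma \ref{lift}, and use the $S^1$-invariance of $V$ to pass from $p_S(\tilde{\phi}(z)) \in V_S$ back to $\tilde{\phi}(z) \in V$, treating $z=0$ separately. Your explicit unpacking of why $p_S^{-1}(V_S) \cap \partial B(S) = V \cap \partial B(S)$ (via the identification of characteristic orbits with diagonal $S^1$-orbits) is exactly the step the paper compresses into ``the final equality follows from the $S^1$ invariance of $V$.''
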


\begin{proof} We have $\tilde{\phi}(0) = 0 \in V$, so it suffices to check $\tilde{\phi}(z) \in V$ for $z \in U \setminus \{ 0 \}$. Suppose then that $z \in \partial B(S)$ with $S>0$. Then Lemma \ref{lift} says that
$$\tilde{\phi}(z) \in p^{-1}(S, \phi_S(p_S(z))) \subset p^{-1}(\{S \} \times  \phi_S(U_S)) $$ $$ \subset p^{-1} (\{S\} \times V_S) = \partial B(S) \cap V$$
as required, where the final equality follows from the $S^1$ invariance of $V$.

\end{proof}
\proofend

\subsection{An embedding of L-shaped domains}\label{two2}

In this subsection Corollary \ref{embed} is applied to prove Theorem \ref{main}, which is restated here for convenience.

\begin{proposition} Suppose $r > a_1 + \dots a_n$. Then there exists a symplectic embedding $L(a_1, \dots, a_n) \hookrightarrow Z(r)$.
\end{proposition}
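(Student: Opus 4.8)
The plan is to apply Corollary~\ref{embed} with $U$ a slightly shrunken copy of $L(a_1,\dots,a_n)$ and $V = Z(r)$, so that everything reduces to producing, for each $S>0$, a Hamiltonian diffeomorphism $\phi_S$ of $\CC P^{n-1}(S)$ sending $U_S$ into $V_S$. First I would understand the reduced pictures $U_S$ and $V_S$ explicitly. Recall $p_S : \partial B(S) \to \CC P^{n-1}(S)$ collapses the diagonal $S^1$-orbits; a point $(z_1,\dots,z_n)$ on $\partial B(S)$ has $\sum \pi|z_k|^2 = S$, so its image is recorded by the normalized moment data $\left(\frac{\pi|z_1|^2}{S},\dots,\frac{\pi|z_n|^2}{S}\right)$ in the standard simplex $\Delta^{n-1}$ (the moment polytope of $\CC P^{n-1}(S)$, scaled by $S$). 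Under this identification the cylinder condition $x_n < r$ cutting out $Z(r)$ becomes, on the slice $\partial B(S)$, the region where the $n$-th moment coordinate is less than $r$; and the $L$-shaped condition $\bigcup_k\{x_k < a_k\}$ becomes the complement of the corner box $\{x_k \ge a_k \ \forall k\}$. So $V_S$ is a half-space-type region in the moment polytope and $U_S$ is the complement of a small simplex sitting at the central vertex.

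The heart of the matter is therefore a purely $\CC P^{n-1}$ statement: for each fixed $S$ I need a Hamiltonian isotopy of $\CC P^{n-1}(S)$ that pushes the complement of the ``deep corner'' into the region $\{x_n < r\}$. The natural idea, following the Biran--Cieliebak--Hind philosophy cited from \cite{busehind11}, is to build $\phi_S$ as the time-one flow of a Hamiltonian that is a function of the moment map coordinates alone, so that the isotopy acts by ``sliding'' the moment image. Because the condition $r > \sum a_k$ gives genuine room, I expect to be able to choose a toric (moment-coordinate-dependent) Hamiltonian $K_S$ whose flow shears the large region $U_S$ off the faces where some $x_k$ is large and concentrates it into the face $\{x_n$ small$\}$; the volume/action bookkeeping needed for this to fit is exactly what the strict inequality $r>\sum a_k$ buys. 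I would make $\phi_S$ depend on $S$ smoothly and be the constant map for small $S$ (so the extension $\tilde H = H\circ p$ is smooth at the origin, as required for Corollary~\ref{embed}), and check the containment $\phi_S(U_S)\subset V_S$ uniformly in $S$.

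The main obstacle I anticipate is the global consistency of this toric sliding on all of $\CC P^{n-1}$: a Hamiltonian that is merely a function of one moment coordinate only moves points in that direction, so a single such flow cannot simultaneously clear the region off the other faces $\{x_k \ge a_k\}$ for $k<n$ and deposit it into $\{x_n < r\}$ without the images overlapping or running off the polytope. I would resolve this by composing several such toric flows, one per direction, or equivalently by choosing a single generating Hamiltonian $H_S$ that is a suitable function of the full moment vector so that its flow realizes a global ``folding'' of the $L$-region into the cylinder slice; keeping this folding injective (a genuine embedding, not just a volume-preserving surjection) and compatible across all values of $S$ is the delicate point. Once the family $\{\phi_S\}$ is constructed and shown to satisfy $\phi_S(U_S)\subset V_S$, Corollary~\ref{embed} immediately yields $\tilde\phi(U)\subset V=Z(r)$, and since $\tilde\phi$ is a symplectomorphism of $\CC^n$ and $U$ can be taken to exhaust $L(a_1,\dots,a_n)$ as the shrinking parameter tends to its limit, the restriction $\tilde\phi|_U$ gives the desired symplectic embedding $L(a_1,\dots,a_n)\hookrightarrow Z(r)$.
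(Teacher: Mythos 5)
Your setup is the right one and matches the paper's Step 1: reduce via Corollary~\ref{embed} to producing a smooth family of Hamiltonian diffeomorphisms $\phi_S$ of $\CC P^{n-1}(S)$ with $\phi_S(U_S)\subset V_S$, constant for small $S$. But the construction of $\phi_S$ --- which is the entire content of the theorem --- is missing, and the mechanism you gesture at cannot work. A Hamiltonian that is a function of the moment coordinates alone generates a flow tangent to the torus fibers; it rotates angles and \emph{preserves} the moment image pointwise, so no such ``toric sliding'' Hamiltonian, nor any composition of them, can move $U_S$ at all in the moment polytope. The maps you need are genuinely non-toric, and asserting that the inequality $r>\sum a_k$ ``buys the bookkeeping'' is not a construction: the region $U_S$ is the complement of a small simplex and is enormous, so trying to push it directly into $V_S$ is the hard direction of the problem.

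The paper's resolution has two ingredients you would need to supply. First, pass to complements: it suffices to find $\phi_S$ carrying the small closed ball $\overline{B(S-r)}=\CC P^{n-1}(S)\setminus V_S$ into the corner box $\{R_i\ge a_i\ \forall i,\ \sum R_i\le S-a_n\}=\CC P^{n-1}(S)\setminus U_S$, which is a far more tractable target. Second, do this with a product map $\Phi^t=(\phi_1^t,\dots,\phi_{n-1}^t)$ of planar Hamiltonian isotopies, where each $\phi_i$ sends the disk $D(S-r)$ into the annulus $A\bigl(a_i,\,S-\sum_{j\ne i}a_j\bigr)$ (possible precisely because $r>\sum a_k$ makes the disk smaller in area than the annulus), together with the quantitative control $\pi|\phi_i^t(z)|^2<\pi|z|^2+ta_i+\epsilon$ for all $t\in[0,1]$. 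That estimate is what guarantees the whole isotopy of $\overline{B(S-r)}$ stays inside $\{\sum\pi|z_i|^2<S-a_n\}\subset B(S)$, so the generating Hamiltonian can be cut off to be supported in the affine chart and hence descends to $\CC P^{n-1}(S)$. Without the complement trick and without this action estimate keeping the isotopy in the affine part, your outline does not close.
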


\begin{proof}
{\bf Step 1.} In this step we translate the general sufficient conditions from section \ref{two1} into specific requirements for our embedding.

Following the notation from Corollary \ref{embed} we set $U = L(a_1, \dots, a_n)$ and $V = Z(r)$. These domains are invariant under our $S^1$ action, and in fact are toric, so for $S>0$ they project to toric domains $U_S, V_S \subset \CC P^{n-1}(S)$. We note that the $T^n$ action on $\CC^n$ descends to the $\CC P^{n-1}(S)$, but now has a $1$ dimensional kernel generated by the diagonal action $z \mapsto e^{it}z$. %We also note that if $S$ is sufficiently small then (as $U$ and $V$ are open neighborhoods of $0$) we have $U_S = V_S = \CC P^{n-1}(S)$.

We can use polar coordinates $R_i = \pi|z_i|^2$, $\theta_i \in \RR / \ZZ$ on $\CC^n$, so $\partial B(S) = \{ \sum R_i = S \}$. We then identify the symplectic reduction of $\partial B(S)$ with the toric manifold associated to the projection of this set to the $(R_1, \dots , R_{n-1})$ plane, a closed triangle $\Delta_S$ with vertices $(0, \dots, 0,S,0, \dots, 0)$. Under this identification, the $T^n$ fiber over $(R_1, \dots , R_n)$ in $\CC^n$ projects to the toric fiber over $(R_1, \dots , R_{n-1})$ via the map $$(\theta_1, \dots , \theta_n) \mapsto (\theta_1 - \theta_n, \dots , \theta_{n-1} - \theta_n).$$ 

Let $\mu : \CC P^{n-1}(S) \to \Delta_S$ be the moment projection.
%We can write the moment image of $\CC P^{n-1}(S)$ as $$\mu(\CC P^{n-1}(S)) = \{R_1 + \dots + R_{n-1} \le 1\}.$$ Then the moment projection of $U_S$ is the unionR
We have
$$\mu(U_S) = \bigcup_{i=1}^{n-1} \{R_i < a_i \} \bigcup \{ \sum_{i=1}^{n-1} R_i > S - a_n\}$$
and 
$$\mu(V_S) = \{ \sum_{i=1}^{n-1} R_i > S-r\}.$$

By Corollary \ref{embed}, to find a symplectic embedding $U \hookrightarrow V$ it suffices to find a smooth family of Hamiltonian functions $H_S$ on $\CC P^{n-1}(S)$ generating diffeomorphisms $\phi_S$ with  $\phi_S(U_S) \subset V_S$ for all $S$. The condition is vacuous when $S \le r$, when $V_S = \CC P^{n-1}(S)$, and so it suffices to construct $H_S$ for $S> r - \epsilon$, say, and then apply a bump function so that $H_S(z)=0$ whenever $S$ is small. 

Equivalently, by looking at the complements, it suffices to construct a family of Hamiltonian diffeomorphisms mapping $$\overline{B(S-r)} = \CC P^{n-1}(S) \setminus V_S \hookrightarrow \CC P^{n-1}(S) \setminus U_S = \{R_i \ge a_i \, \forall i , \, \sum R_i \le S - a_n\}.$$

{\bf Step 2.} In this step we show that Hamiltonian diffeomorphisms as in Step 1 do indeed exist, and we can even arrange the support to lie in the affine part $\{\sum_{i=1}^{n-1} R_i < S\} \subset \CC P^{n-1}$. We describe the diffeomorphisms for a fixed $S$, however it will be clear the generating functions can be chosen to depend smoothly on $S$ (after identifying our models of $\CC P^{n-1}(S)$ with the fixed underlying manifold).

We define $D(T) = \{ \pi|z|^2 \le T\}$ to be the closed disk in the $z$ plane, and $A(T_1, T_2) = \{ T_1 < \pi|z|^2 < T_2\}$ to be the open annulus.

Let $\phi_i$, $1 \le i \le n-1$, be Hamiltonian diffeomorphisms of the plane %with compact support in $D(S- \sum_{j \neq i} a_j)$ 
such that  $$\phi_i(D(S-r)) \subset A(a_i, S- \sum_{j \neq i} a_j).$$ %into the annulus $$A(a_i, S- \sum_{j \neq i} a_j) = \{a_i < \pi|z|^2 < S- \sum_{j \neq i} a_j \}.$$
We note that such Hamiltonians exist since $r > \sum a_i$ (implying that the disk has strictly smaller area than the annulus). Moreover we can arrange that a Hamiltonian flow $\phi_i^t$ with $\phi_i^1 = \phi_i$ satisfies $\pi |\phi^t_i(z)|^2 < \pi|z|^2 + ta_i + \epsilon$ for all $0 \le t \le 1$ and $\epsilon$ arbitrarily small.

Now consider the Hamiltonian flow of $\CC^{n-1}$ given by  $$\Phi^t: (z_1, \dots z_{n-1}) \mapsto (\phi^t_1(z_1), \dots \phi^t_{n-1}(z_{n-1})).$$
Our proof will follow from properties of $\Phi^t$.

{\bf Claim.} 
\begin{enumerate}
\item $\Phi^1(\overline{B(S-r)}) \subset \{\pi|z_i|^2 > a_i \, \forall i , \, \sum \pi|z_i|^2 < S - a_n\}$.
\item $\Phi^t(\overline{B(S-r)}) \subset \{\sum \pi|z_i|^2 < S - a_n \}$ for all $0 \le t \le 1$.
\end{enumerate}

The second part of the claim implies that $\Phi^t$ has a generating Hamiltonian function which can be cut off to have support inside $B(S)$. Hence there exists a Hamiltonian diffeomorphism of $\CC P^2(S)$ which acts on $\overline{B(S-r)}$ in the same way as $\Phi^1$. In particular the first part of the claim then implies there exists a Hamiltonian diffeomorphism of $\CC P^2(S)$ mapping $\overline{B(s-r)} = \CC P^2 \setminus V_S$ into $\CC P^2(S) \setminus U_S$ as required.

To justify the claim we suppose $(z_1, \dots z_{n-1}) \in \overline{B(S-r)}$.
Then we have $$\sum_{i=1}^{n-1} \pi |\phi^t_i (z_i)|^2 < \sum (\pi |z_i|^2 + ta_i + \epsilon) \le S - r + (n-1)\epsilon + t\sum a_i < S - a_n$$ when $\epsilon$ is sufficiently small. Here the first inequality follows from our assumptions on the $\phi^t_i$, the second holds since $\sum \pi|z_i|^2 \le S-r$ and the final innequality uses $\sum a_i < r$. This establishes statement (2). For the remainder of (1) we just recall that $\phi^1_i(z) = \phi_i(z) \subset \{ \pi |z|^2 > a_i\}$.

% Hence $\Phi$ gives a symplectic embedding from $B(S-r)$ into $B(S - a_n)$, and 
%as each $\pi |\phi_i(z_i)|^2 > a_i$ we see that $\Phi(B(S-r))$, the complement of the image of $V_S$, is contained in the complement of $U_S$.

%To conclude we observe that the symplectic embedding $\Phi$ can be realized by a Hamiltonian diffeomorphism generated by a function with support in $B(S)$, and hence is the restriction of a Hamiltonian diffeomorphism of $\CC P^{n-1}(S)$. For this, in affine space $\Phi$ is the time $1$ flow of
%$$\Phi^t(z_1, \dots ,z_{n-1}) = (\phi^t_1(z_1), \dots, \phi^t_{n-1} (z_{n-1})).$$
%Now, if $z \in B(S -r)$, then by the properties of the $\phi^t_i$ we have
%$$\sum \pi|\phi^t_i(z_i)|^2 < \sum (\pi|z_i|^2 + ta_i + \epsilon)$$  $$< S - r + (n-1)\epsilon + t\sum a_i < S - a_n + (n-1)\epsilon$$
%for $0 \le t \le 1$, and so $\Phi^t(z) \in B(S - a_n + (n-1)\epsilon)$. Hence a generating Hamiltonian can be cut off to have support inside $B(S)$ as required.

\proofend

\end{proof}

%References:

%\vspace{3 mm}

%[CG]: Symplectic embeddings from concave toric domains into convex ones

%[CCGHR]: Symplectic embeddings into concave toric domains

%[CGHR]: The asymptotics of ECH capacities

%[CGS]: Subleading asymptotics of ECH capacities

%[H1]: Quantitative embedded contact homology

%[H2]: ECH capacities and the Ruelle invariant

%[McD]: The Hofer conjecture on embedding symplectic ellipsoids

%\newline
%hi


\begin{thebibliography}{99}

\bibitem{abhs} A. Abbondandolo, B. Bramham, U. Hryniewicz and P. Salam\~{a}o, {\em Systolic ratio, index of closed orbits and convexity for tight contact forms on the three-sphere}, Compos. Math. 154 (2018), 2643--2680.


\bibitem{vm} S. Artstein-Avidan, R. Karasev and Y. Ostrover, {\em From symplectic measurements to the Mahler conjecture}, Duke Math. J. 163 (2014), 2003 - 2022.

%\bibitem{biran99} P. Biran, {\em A stability property of symplectic packing}, Invent. Math (1999)

\bibitem{busehind11} O. Buse and R. Hind, {\em Symplectic embeddings of ellipsoids in dimension greater than four}, Geom. Top. 15 (2011), 2091--2110.

%\bibitem{busehind13} O. Buse and R. Hind, {\em Ellipsoid embeddings and symplectic packing stability}, Comp. Math. (2013).

%\bibitem{busehindopshtein16} O. Buse, R. Hind and E. Opshtein, {\em Packing stability for symplectic $4$-manifolds}, Trans. AMS (2016).

%\bibitem{cgetal} D. Cristofaro-Gardiner, T. Holm, A. Mandini, and A. Pires, {\em On infinite staircases in toric symplectic four-manifolds}, arXiv preprint.
%Symplectic embeddings into toric
 


%\bibitem{ccghr} K. Choi, D. Cristofaro-Gardiner, M. Hutchings and V. Ramos, {\em Symplectic embeddings into concave toric domains}.

%\bibitem{cghr} D. Cristofaro-Gardiner, M. Hutchings and V. Ramos, {\em The asymptotics of ECH capacities}, Invent. Math

%\bibitem{cgs} D. Cristofaro-Gardiner and N. Savale, {\em Subleading asymptotics of ECH capacities}, Selecta

\bibitem{chls} K. Cieliebak, H. Hofer, J. Latschev and F. Schlenk, {\em Quantitative symplectic geometry}, Dynamics, ergodic theory, and geometry, 1-44 MSRI Publ. 54, (2007).

\bibitem{cg} D. Cristofaro-Gardiner, {\em Symplectic embeddings from concave toric domains into convex ones}, J. Diff. Geom. 112 (2019), 199--232.

\bibitem{ghr} J. Gutt, M. Hutchings and V. Ramos, {\em Examples around the strong Viterbo conjecture},
J. Fixed Point Theory Appl. 24 (2022).

\bibitem{gpr} J. Gutt, M. Pereira and V. Ramos, {\em Cube normalized symplectic capacities}, arXiv:2208.13666.

\bibitem{pvn} A. Pelayo and S. V\~{u} Ng\d{o}c, {\em Hofer's question on intermediate symplectic capacities},
Proc. Lond. Math. Soc. 110 (2015), 787--804.



%\bibitem{hales} T. Hales. {\em A proof of the Kepler conjecture}, (2005)

%\bibitem{h1} M. Hutchings, {\em Quantitative embedded contact homology}, JDG.

%\bibitem{h2} M. Hutchings, {\em ECH capacities and the Ruelle invariant}, JFPTA.

%\bibitem{mcd} D. McDuff, {\em The Hofer conjecture on embedding symplectic ellipsoids}, JDG.

%\bibitem{mp1994}  D. McDuff and L. Polterovich, {\em Symplectic packings and algebraic geometry}, Invent. Math.

%\bibitem{schlenk17} F. Schlenk, {\em Symplectic embedding problems old and new}, Survey at http://members.unine.ch/felix.schlenk/Daejeon18/Survey.embeddings.pdf

%\bibitem{Bangert} V. Bangert, \textit{On the lengths of closed geodesics on almost round spheres.} Math. Z. 191, 549--558 (1986)

%\bibitem{CDHR} V. Colin, P. Dehornoy, U. Hryniewicz, A. Rechtman, \textit{Generic properties of 3-dimensional Reeb flows: Birkhoff sections and entropy.} arXiv:2202.01506

%\bibitem{Hofer93} H. Hofer, \textit{Pseudoholomorphic curves in symplectizations with applications to the Weinstein conjecture in dimension three.} Invent. Math. 114, 515--563 (1993)

%\bibitem{props1} H. Hofer, K. Wysocki and E. Zehnder, \textit{Properties of pseudo-holomorphic curves in symplectisations. I. Asymptotics.} Ann. Inst. H. Poincar\'e C Anal. Non Lin\'eaire 13, 337--379 (1996)

%\bibitem{props2} H. Hofer, K. Wysocki and E. Zehnder, \textit{Properties of pseudo-holomorphic curves in symplectisations. II. Embedding controls and algebraic invariants.} Geom. Funct. Anal. 5, 270--328 (1995)

%\bibitem{props3} H. Hofer, K. Wysocki and E. Zehnder, \textit{Properties of pseudoholomorphic curves in symplectizations. III. Fredholm theory.} In: Topics in Nonlinear Analysis, Progr. Nonlinear Differential Equations Appl. 35, Birkhäuser, Basel, 381--475 (1999)

\bibitem{convex} H. Hofer, K. Wysocki and E. Zehnder, {\em The dynamics on three-dimensional strictly convex energy surfaces}, Ann. of Math. (2) 148 (1998), 197--289.

%\bibitem{fast} U. Hryniewicz, \textit{Fast finite-energy planes in symplectizations and applications.} Trans. Amer. Math. Soc. 364, 1859--1931 (2012) 

%\bibitem{elliptic} U. Hryniewicz, P. A. S. Salom\~ao, \textit{Elliptic bindings for dynamically convex Reeb flows on the real projective three-space.} Calc. Var. Partial Differential Equations 55, art. 43, 57 pp.
%(2016)

%\bibitem{HSW}
%U. Hryniewicz, P. A. S. Salom\~ao, K. Wysocki, \textit{Genus zero global surfaces of section for Reeb flows and a result of Birkhoff.} J. Eur. Math. Soc. (2022), published online first

%\bibitem{SiefringCPAM} R. Siefring, \textit{Relative asymptotic behavior of pseudoholomorphic half-cylinders.} Comm.Pure Appl. Math. 61, 1631--1684 (2008)

%\bibitem{SiefringMathAnn} R. Siefring, \textit{Finite-energy pseudoholomorphic planes with multiple asymptotic limits.} Math. Ann. 368, no. 1-2, 367--390 (2017)

\end{thebibliography}
\end{document}